\newtheorem{Thm}{Theorem}
\newtheorem{Lem}{Lemma}
\newtheorem{Def}{Definition}
\newtheorem{Prop}{Proposition}
\theoremstyle{remark}
\theoremstyle{definition}
\title{\textbf{On the Martingale Representation with Respect to the super-Brownian Filtration}}
\author{Christian Mandler\thanks{Mathematisches Institut, Justus-Liebig-Universit\"{a}t Giessen (\textit{e-mail: \href{mailto:christian-mandler@math.uni-giessen.de}{Christian.Mandler@math.uni-giessen.de}}) Financial support from the Deutsche Forschungsgemeinschaft is gratefully acknowledged. }
\and Ludger Overbeck\thanks{Mathematisches Institut, Justus-Liebig-Universit\"{a}t Giessen (\textit{e-mail: \href{mailto:Ludger.Overbeck@math.uni-giessen.de}{Ludger.Overbeck@math.uni-giessen.de}}) }
}
\begin{document}

\maketitle 
\thispagestyle{empty}

\begin{abstract}
We derive the explicit form of the martingale representation for square-integrable processes that are martingales with respect to the natural filtration of the super-Brownian motion. This is done by using a weak extension of the Dupire derivative for functionals of superprocesses.
\end{abstract}

\section*{Introduction}

Dupire's landmark work on the functional It\={o}-formula \cite{Du09} gave rise to a completely new approach to numerous questions in the field of stochastic calculus. In this paper, we use it to derive a martingale representation of functionals of the super-Brownian motion, a well-studied, infinite-dimensional, namely measure-valued, stochastic process. \\

Cont and Fournié (\cite{CoFo10}, \cite{CoFo13}, \cite{Co16}) as well as Levental et al. (\cite{Lev13}) used different formalizations of Dupire's idea to derive versions of a functional It\={o}-formula for real-valued semi-martingales using different derivatives. In addition, Cont used the functional It\={o}-formula to derive a martingale representation formula for Brownian martingales and extended the functional derivative to a weak derivative for square-integrable functionals (see \cite{CoFo13}, \cite{Co16}). \\

In \cite{MaOv20} the functional It\={o}-formula for $B(A,c)$-super\-processes is derived using derivatives extending the ones introduced by Cont and Fournié to functionals on the infinite-dimensional space of measures. In this paper, we extend the vertical derivative to a weak one and use it to derive a martingale representation for functionals of the super-Brownian motion.  \\

The martingale representation of functionals of superprocesses has been studied in \cite{Ov95}, \cite{EP94} and \cite{EP95}, where the authors proved the existence of a martingale representation and derive the explicit form for the richer class of so-called historical processes using their cluster representation, from which one can derive the representation for superprocesses as a projection. The approach presented in Section \ref{sec.Mart} is based on the ideas presented by Cont and Fournié in \cite{CoFo13} and \cite{Co16}, does not use the concept of historical processes and only considers the super-Brownian motion as a prototype of more general measure-valued diffusions.

\section{The setting}

Consider a filtered probability space $(\Omega, \mathcal{F}, (\mathcal{F}_t)_{t \in [0,T]}, \mathbb{P})$, where $\mathbb{P}$ is the law of the super-Brownian motion on $\mathbb{R}^d$, $d \geq 1$, starting at $m \in M_F(\mathbb{R}^d)$, $\mathcal{F}$ is the Borel-$\sigma$-field corresponding to $\Omega= C([0,T], M_F(\mathbb{R}^d))$ and $(\mathcal{F}_t)_{t \in [0,T]}$ is the canonical filtration satisfying the usual conditions. Also, denote the Laplacian operator by $\Delta$. From \cite{Daw93}, we know that the probability measure $\mathbb{P}$ is a solution to the following martingale problem:
\begin{gather*}
\mathbb{P}(X(0) = m) = 1 \text{ and for all $\phi \in D(\textstyle{\frac{1}{2}}\Delta)$ the process} \\
M(t)(\phi) = \langle X(t), \phi \rangle - \langle X(0) , \phi \rangle - \int_0^t \langle X(s), \frac{1}{2}\Delta \phi \rangle ds , \quad t \in [0,T] \\
\text{is a $(\mathcal{F}_t)_t$-martingale $\mathbb{P}$-a.s. and has quadratic variation}\\
[M(\phi)]_t = \int_0^t \langle X(s), \phi ^2 \rangle ds \quad \forall t \in [0,T] \quad \mathbb{P}-a.s..
\end{gather*}
As the super-Brownian motion is a $B(A,c)$-superprocess, the process $M(\phi)$ in the martingale problem gives rise to a martingale measure in the sense of \cite{Wal86} which we denote by $M_X$. This martingale measure is defined on $\Omega \times \mathcal{B}[0,T] \times \mathcal{E}$ and plays in a crucial role in the It\={o}-formulas presented in \cite{MaOv20} as well as the results introduced below. \\

Denote by $\mathbb{S}(\mathbb{R}^d)$ the Schwartz space on $\mathbb{R}^d$. Functions in $\mathbb{S}(\mathbb{R}^d)$ are infinitely continuously differentiable. Thus $\mathbb{S}(\mathbb{R}^d) \subset D(\frac{1}{2} \Delta)$ and for any $h \in \mathbb{S}(\mathbb{R}^d)$ it holds $\frac{1}{2} \Delta h \in D(\frac{1}{2}\Delta)$.\\

In addition, let $\mathcal{S}$ be the space of simple functions, i.e. the space of functions that are linear combinations of functions $f: \Omega \times [0,T] \times  \mathbb{R}^d \rightarrow \mathbb{R}$ of form
\begin{equation*}
f(\omega, s, x) = X(\omega)  1_{(a,b]}(s) 1_A(x)
\end{equation*}
with $0 \leq a < b \leq t$, $X$ bounded and $\mathcal{F}_a$-measurable and $A \in \mathcal{B}( \mathbb{R}^d)$. Denote by $\mathcal{P}$ the predictable $\sigma$-field on $\Omega \times [0,T] \times  \mathbb{R}^d$, i.e. the $\sigma$-field generated by $\mathcal{S}$. If a function is $\mathcal{P}$-measurable, we say it is predictable. \\

In Section \ref{sec.Mart} we consider functions $F$ of the path of the super-Brownian motion $X$. More precisely, we consider the stopped path $X_t$ of the process $X$. To formalize the notion of (stopped) paths, consider an arbitrary path $\omega \in D([0,T], M_F( \mathbb{R}^d))$, the space of right continuous functions with left limits. We equip $D([0,T], M_F( \mathbb{R}^d))$ with the metric $\tilde{d}$ given by 
\begin{equation*}
\tilde{d}(\omega, \omega') = \sup_{u \in [0,T]} d_P( \omega (u), \omega'(u))
\end{equation*}
for $\omega$, $\omega' \in D([0,T], M_F( \mathbb{R}^d))$, where $d_P$ is the Prokhorov metric on $M_F( \mathbb{R}^d)$. For such a path, define the path stopped at time $t$ by $\omega_t(u) = \omega ( t \wedge u)$. 
Using this, we define an equivalence relation on the space $[0,T] \times D([0,T], M_F( \mathbb{R}^d))$ by 
\begin{equation*}
(t ,  \omega) \sim (t' , \omega') \quad \Leftrightarrow \quad t = t' \ \text{and} \ \omega_t = \omega'_{t'} .
\end{equation*} 
This relation gives rise to the quotient space 
\begin{align*}
\Lambda_T &= \{ (t , \omega_t) \ | \ (t, \omega) \in [0,T] \times D([0,T], M_F( \mathbb{R}^d)) \} \\
&= ([0,T] \times D([0,T], M_F( \mathbb{R}^d))) / \sim .
\end{align*}
Next, define a metric $d_\infty$ on $\Lambda_T$ by
\begin{equation*}
d_\infty((t, \omega), (t', \omega')) = \sup_{u \in [0,T]} d_P (\omega(u \wedge t), \omega'(u \wedge t')) + | t - t'|.
\end{equation*}

A functional $F : \Lambda_T \rightarrow \mathbb{R}$ is continuous with respect to $d_\infty$ if for all $(t, \omega) \in \Lambda_T$ and every $\varepsilon > 0$ there exists an $\eta > 0$ such that for all $(t', \omega') \in \Lambda_T$ with $d_\infty((t,\omega), (t', \omega')) < \eta$ we have
\begin{equation*}
|F(t, \omega) - F(t' , \omega')| < \varepsilon .
\end{equation*}

Further, a functional $F$ on $[0,T] \times D([0,T], M_F( \mathbb{R}^d))$ is called \emph{non-anticipative} if it is a measurable map on the space of stopped paths, i.e. $F: \Lambda_T \rightarrow \mathbb{R}$. In other words, $F$ is non-anticipative if $F (t, \omega) = F(t, \omega_t)$ holds for all $\omega \in D([0,T], M_F( \mathbb{R}^d))$. \\

For continuous non-anticipative functionals, we can define the following derivative. For this, denote by $\delta_x$, $x \in \mathbb{R}^d$, the Dirac measure with unit mass at $x$.

\begin{Def}
A continuous non-anticipative functional $F : \Lambda_T \rightarrow \mathbb{R}$ is called \emph{vertically differentiable} at $(t, \omega) \in \Lambda_T$ in direction $\delta_x$, $x \in \mathbb{R}^d$, if the limit
\begin{equation*}
\mathcal{D}_x F(t, \omega) = \lim_{\varepsilon \rightarrow 0} \frac{F(t, \omega_t + \varepsilon \delta_x 1_{[t,T]}) - F(t, \omega_t)}{\varepsilon}
\end{equation*}
exists. If this is the case for all $(t, \omega) \in \Lambda_T$, we call $\mathcal{D}_xF$ the \emph{vertical derivative of $F$ in direction $\delta_x$}. Higher order vertical derivatives are defined iteratively.
\end{Def}

\section{The Explicit Form of the Martingale Representation for the Super-Brownian Motion}\label{sec.Mart}

Let $\mathcal{L}^2 (M_X)$ be the space of predictable functions $\phi : \Omega \times [0,T] \times  \mathbb{R}^d \rightarrow \mathbb{R}$  such that\footnote{This space coincides with the space $\mathcal{P}_M$ in \cite{Wal86}. Therefore the stochastic integral with respect to the martingale measure $M_X$ is well defined for all $\phi \in \mathcal{L}^2(M_X)$.}
\begin{equation*}
\| \phi \|_{\mathcal{L}^2(M_X)}^2 = \mathbb{E} \left[ \int_0^T \int_{\mathbb{R}^d} \phi^2(s,x) X(s) (dx)ds \right] < \infty ,
\end{equation*}
$\mathcal{M}^2$ be the space of square-integrable $(\mathcal{F}_t)_t$-martingales with initial value zero and with norm 
\begin{equation*}
\| Y \|^2_{\mathcal{M}^2} = \mathbb{E} [ Y(T)^2 ] .
\end{equation*}

Further define the space $U$ which is the linear span of functions of form
\begin{equation*}
\phi_{\Gamma,a,h}(\omega, t,x) = \Gamma {(\omega)} \cdot h(x) 1_{(a ,T]}(t) 
\end{equation*}
with $\Gamma$ bounded, $a \in [0,T)$ and $\mathcal{F}_a$-measurable and $h \in \mathbb{S}(\mathbb{R}^d)$.  As functions in $U$ can be expressed as pointwise limits of functions in $\mathcal{S}$, these functions are predictable. 

\begin{Prop} \label{AppendC}
The space $U$ is a dense subset of $\mathcal{L}^2(M_X)$.
\end{Prop}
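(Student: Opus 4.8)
The goal is to show $U$ is dense in $\mathcal{L}^2(M_X)$. Since $\mathcal{L}^2(M_X) = \mathcal{P}_M$ in Walsh's sense and $\mathcal{S}$ (the simple functions) is dense in this space by the standard construction of the stochastic integral with respect to a martingale measure, it suffices to show that every simple function $f(\omega,s,x) = X(\omega) 1_{(a,b]}(s) 1_A(x)$ can be approximated in the $\mathcal{L}^2(M_X)$-norm by elements of $U$. So the plan is to reduce to two independent approximation tasks: first replace the time-indicator $1_{(a,b]}(s)$, and second replace the space-indicator $1_A(x)$.

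For the time part, I would write $1_{(a,b]}(s) = 1_{(a,T]}(s) - 1_{(b,T]}(s)$, so that $X\cdot 1_{(a,b]}(s) 1_A(x)$ is a difference of two functions of the form $\Gamma\cdot g(x) 1_{(c,T]}(s)$ with $\Gamma$ bounded and $\mathcal{F}_c$-measurable (note $X$, being $\mathcal{F}_a$-measurable, is also $\mathcal{F}_b$-measurable). Hence it is enough to approximate functions of the form $\Gamma(\omega) 1_A(x) 1_{(c,T]}(s)$ with $\Gamma$ bounded $\mathcal{F}_c$-measurable and $A \in \mathcal{B}(\mathbb{R}^d)$ by elements of $U$. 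The only obstacle left is the space variable: we must approximate $1_A$ by Schwartz functions $h \in \mathbb{S}(\mathbb{R}^d)$. The natural estimate is
\begin{equation*}
\| \Gamma 1_A 1_{(c,T]} - \Gamma h 1_{(c,T]} \|_{\mathcal{L}^2(M_X)}^2 = \mathbb{E}\left[ \Gamma^2 \int_c^T \int_{\mathbb{R}^d} (1_A(x) - h(x))^2 X(s)(dx)\, ds \right],
\end{equation*}
so after bounding $\Gamma^2$ by a constant and using Fubini it is enough to control $\mathbb{E}\left[\int_c^T \langle X(s), (1_A - h)^2\rangle\, ds\right]$. Using the first-moment formula for the super-Brownian motion, $\mathbb{E}[\langle X(s), g\rangle] = \langle m, P_s g\rangle$ for the Brownian semigroup $P_s$ (and $g \geq 0$), this reduces to making $\int_c^T \langle m, P_s (1_A - h)^2\rangle\, ds$ small. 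Since $\mathbb{S}(\mathbb{R}^d)$ is dense in $L^2(\mathbb{R}^d, dx)$ and indeed in $L^2$ of any finite measure absolutely continuous enough, and since $P_s$ maps into nice functions, a routine argument (approximate $1_A$ in $L^1 + L^2$ against the relevant reference measure and push through the semigroup) gives the claim; one may first reduce $A$ to a bounded set and use that $1_A$ is bounded.

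The main obstacle I anticipate is the measure-theoretic bookkeeping in the space-variable approximation: the reference measure against which we must approximate $1_A$ by Schwartz functions is $\mathbb{E}\left[\int_c^T X(s)(dx)\, ds\right]$, a finite Borel measure on $\mathbb{R}^d$ (finiteness follows from the moment formula and $m \in M_F(\mathbb{R}^d)$), and one needs $\mathbb{S}(\mathbb{R}^d)$ to be dense in $L^2$ of this measure. This holds because $C_c^\infty(\mathbb{R}^d) \subset \mathbb{S}(\mathbb{R}^d)$ is dense in $L^2(\mu)$ for any finite (more generally, Radon) measure $\mu$ on $\mathbb{R}^d$; so one invokes this together with boundedness of the approximants (which can be arranged, e.g. by truncation, so that dominated convergence applies after composing with $P_s$ inside the time integral). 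Assembling the two reductions and a triangle-inequality/limit argument then yields that $\overline{U} \supseteq \overline{\mathcal{S}} = \mathcal{L}^2(M_X)$, and since $U \subseteq \mathcal{L}^2(M_X)$ we conclude $\overline{U} = \mathcal{L}^2(M_X)$, as claimed.
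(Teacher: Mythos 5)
Your proposal is correct and follows essentially the same route as the paper: both reduce the claim to the density of the simple functions $\mathcal{S}$ in $\mathcal{L}^2(M_X)$ from Walsh's theory and then identify the closure of $U$ with that of $\mathcal{S}$. In fact, where the paper merely asserts that these closures coincide, you supply the missing approximation argument (splitting $1_{(a,b]} = 1_{(a,T]} - 1_{(b,T]}$ and approximating $1_A$ by Schwartz functions in $L^2$ of the finite occupation measure $\int_c^T m P_s\, ds$ via the first-moment formula), which is a welcome level of detail.
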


\begin{proof}
From the above, we already know that functions $\phi \in U$ are predictable. Hence to show that $U$ is indeed a subspace of $\mathcal{L}^2(M_X)$, we have to prove that $\| \phi \|^2 < \infty$ holds for all $\phi \in U$. \\

It is enough to show that $\| \phi_{\Gamma, a, h} \|_{\mathcal{L}^2(M_X)}^2 < \infty$ for all functions $ \phi_{\Gamma, a, h}$ generating $U$. Assuming that the bounds of $\Gamma^2$ and $h^2$ are given by $C_{\Gamma^2}$ and $C_{h^2}$, respectively, we get 
\begin{align*}
\| \phi_{\Gamma, a, h} \|_{\mathcal{L}^2(M_X)}^2 &= \mathbb{E}\left[  \int_0^T \int_{\mathbb{R}^d} (\Gamma h(x) 1_{(a , T]}(s))^2 X(s)(dx) ds \right] \\
&\leq  C_{\Gamma^2} \mathbb{E}\left[ \int_0^T \int_{\mathbb{R}^d}  h^2(x) 1_{(a , T]}(s) X(s)(dx) ds \right] \\
&\leq  C_{\Gamma^2} \, C_{h^2}  \mathbb{E}\left[ \int_a^T  X(s)(\mathbb{R}^d) ds \right] \\
&\leq  C_{\Gamma^2} \, C_{h^2} \, (T-a) \, \mathbb{E}\left[ \max_{t \in [a, T]}  X(t)(\mathbb{R}^d) \right] .
\end{align*}

As the total mass of $X$ is a critical Feller continuous state branching process, we know that 
\begin{equation*}
\mathbb{E}[ X(t)(\mathbb{R}^d)] = X(0)(\mathbb{R}^d) < \infty,
\end{equation*}
holds for all $t \in [0,T]$, which yields that $\phi_{\Gamma, a , h}$ has finite $\| \cdot \| _{\mathcal{L}^2(M_X)}$-norm. Consequently $\phi \in \mathcal{L}^2(M_X)$ and thus $U \subset \mathcal{L}^2(M_X)$. \\

To obtain the density of $U$ in $\mathcal{L}^2(M_X)$, note that $\mathcal{S}$ is dense in $\mathcal{L}^2(M_X)$, i.e. $\bar{\mathcal{S}} = \mathcal{L}^2(M_X)$ holds. Since the closures of $\mathcal{S}$ and $U$ coincide, we obtain $\bar{U} = \bar{\mathcal{S}} = \mathcal{L}^2(M_X)$ and thus that $U$ is dense in $\mathcal{L}^2(M_X)$.
\end{proof}

In the following, we derive the martingale representation formula for all square-integrable $(\mathcal{F}_t)_t$-martingales. We do so by extending the notion of vertical derivatives to obtain an operator $\nabla_M$ on $\mathcal{M}^2$ which plays a crucial role in the martingale representation. To get the general representation formula, we start by deriving the formula and defining the operator $\nabla_M$ for martingales in a subset of $\mathcal{M}^2$. As the considered subspace is dense in $\mathcal{M}^2$, we can then extend the operator as well as the martingale representation formula to all $Y \in \mathcal{M}^2$ and thus obtain a martingale representation for all square-integrable $(\mathcal{F}_t)_t$-martingales. 

\begin{Def}
A linear operator $\Pi$ mapping from its domain $D(\Pi)$ into a Hilbert space $\mathcal{H}$ is called an \emph{extension} of the linear operator $\tilde{\Pi} : D(\tilde{\Pi}) \rightarrow \mathcal{H}$ if $D(\tilde{\Pi}) \subset D(\Pi)$ and $\tilde{\Pi} v = \Pi v$ for all $v \in D(\tilde{\Pi})$. 
\end{Def}

To construct the just mentioned subspace of $\mathcal{M}^2$, we consider the stochastic integral $I_{M_X}(\phi)$ of a function $\phi \in \mathcal{L}^2(M_X)$ with respect to the martingale measure $M_X$ corresponding to $X$. This allows us to define the set
\begin{equation*}
I_{M_X}(U) = \{ Y \ | \ Y(t) = \int_0^t \int_{\mathbb{R}^d} \phi(s,x) M_X(ds, dx) , \ \phi \in U , \ t \in [0,T] \} .
\end{equation*}
To prove that this is indeed a subset of $\mathcal{M}^2$, we need parts of the proof of the following result.

\begin{Prop} \label{PropIso}
The mapping 
\begin{align}
\begin{split}
I_{M_X}: \quad  \mathcal{L}^2(M_X) \quad & \rightarrow \quad \mathcal{M}^2 \\
\phi \quad  & \mapsto \quad  \int_0^\cdot \int_{\mathbb{R}^d} \phi(s,x) M_X(ds, dx) 
\end{split}
\label{DefIM}
\end{align}
is an isometry.
\end{Prop}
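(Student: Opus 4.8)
The plan is to follow the classical construction of the stochastic integral against an orthogonal martingale measure in the sense of \cite{Wal86}: first establish the identity on simple integrands (and on the generators of $U$), where $I_{M_X}(\phi)$ is an explicit finite sum of martingale increments, and then extend to all of $\mathcal{L}^2(M_X)$ by density and completeness. An isometry between normed spaces is just a linear norm-preserving map, and linearity of $I_{M_X}$ is immediate from the construction and is preserved under the limits below; moreover both $\mathcal{M}^2$ (with $\langle Y_1,Y_2\rangle_{\mathcal{M}^2} = \mathbb{E}[Y_1(T)Y_2(T)]$) and $\mathcal{L}^2(M_X)$ (with $\mathbb{E}[\int_0^T\int_{\mathbb{R}^d}\phi_1\phi_2\,X(s)(dx)\,ds]$) are Hilbert spaces, so by polarisation it suffices to prove $\mathbb{E}[I_{M_X}(\phi)(T)^2] = \|\phi\|^2_{\mathcal{L}^2(M_X)}$ for $\phi$ in a dense subset and then pass to the limit.

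The single genuine input is the covariance structure of $M_X$: since $M(\phi)$ from the martingale problem generates $M_X$ as an orthogonal martingale measure in Walsh's sense, its covariance measure is $X(s)(dx)\,ds$, i.e. $\langle M_X((0,\cdot]\times A), M_X((0,\cdot]\times B)\rangle_t = \int_0^t X(s)(A\cap B)\,ds$ — this follows from the quadratic variation $[M(\phi)]_t = \int_0^t\langle X(s),\phi^2\rangle\,ds$ by polarisation together with Walsh's approximation of indicators by Schwartz functions. I would record this at the outset.

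Next, for generators $\phi_k(\omega,s,x) = Z_k(\omega)1_{(a_k,b_k]}(s)1_{A_k}(x)$, $k=1,2$, with $Z_k$ bounded and $\mathcal{F}_{a_k}$-measurable, I would write $I_{M_X}(\phi_k)(t) = Z_k\,M_X\big((a_k\wedge t, b_k\wedge t]\times A_k\big)$, which is a square-integrable martingale vanishing at $0$; this in particular shows $I_{M_X}(U)\subset\mathcal{M}^2$, as noted before the statement. Using the covariance of $M_X$ and the $\mathcal{F}_{a_k}$-measurability of the $Z_k$ to compute the bracket, I would obtain $\langle I_{M_X}(\phi_1), I_{M_X}(\phi_2)\rangle_T = \int_0^T\int_{\mathbb{R}^d}\phi_1(s,x)\phi_2(s,x)\,X(s)(dx)\,ds$; since $I_{M_X}(\phi_1)I_{M_X}(\phi_2) - \langle I_{M_X}(\phi_1), I_{M_X}(\phi_2)\rangle$ is a martingale started at $0$, taking expectations gives the polarised isometry on generators, and summing over a general simple $\phi = \sum_i\phi_i$ yields $\mathbb{E}[I_{M_X}(\phi)(T)^2] = \sum_{i,j}\mathbb{E}[\int_0^T\int\phi_i\phi_j\,X(s)(dx)\,ds] = \mathbb{E}[\int_0^T\int\phi^2\,X(s)(dx)\,ds] = \|\phi\|^2_{\mathcal{L}^2(M_X)}$.

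Finally I would extend by density: given $\phi\in\mathcal{L}^2(M_X)$, choose simple $\phi_n\to\phi$ in $\mathcal{L}^2(M_X)$ (possible since $\bar{\mathcal{S}} = \mathcal{L}^2(M_X)$, cf. the proof of Proposition \ref{AppendC}); by the isometry on $\mathcal{S}$ the sequence $(I_{M_X}(\phi_n))_n$ is Cauchy in the complete space $\mathcal{M}^2$, hence converges to some $Y\in\mathcal{M}^2$ which we define to be $I_{M_X}(\phi)$, and letting $n\to\infty$ in $\|I_{M_X}(\phi_n)\|_{\mathcal{M}^2} = \|\phi_n\|_{\mathcal{L}^2(M_X)}$ gives the claim for all $\phi$. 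The step I expect to require the most care is the computation of $\langle I_{M_X}(\phi_1), I_{M_X}(\phi_2)\rangle_T$ on generators — correctly handling the overlap of the time intervals $(a_1,b_1]$, $(a_2,b_2]$ and of the spatial sets $A_1,A_2$ — since that is precisely where the orthogonal-martingale-measure structure of $M_X$, inherited from the martingale problem, does the work; the density extension afterwards is routine.
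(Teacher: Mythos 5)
Your proof is correct, but it takes a more constructive route than the paper. The paper's proof is top-down: it invokes (an adaptation of) Theorem 2.5 of \cite{Wal86} directly for arbitrary $\phi,\psi\in\mathcal{L}^2(M_X)$ — namely that $I_{M_X}(\phi)I_{M_X}(\psi)-\int_0^\cdot\int\int\phi(s,x)\psi(s,y)\,Q_{M_X}(ds,dx,dy)$ is a martingale — takes expectations, and then identifies the covariation via orthogonality of $M_X$ and Example 7.1.3 of \cite{Daw93}, which gives $\nu(ds,dx)=cX(s)(dx)\,ds$ with $c=1$; setting $\phi=\psi$ and $t=T$ finishes the argument in a few lines. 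You instead rebuild Walsh's construction from the bottom up: you extract the covariance measure $X(s)(dx)\,ds$ from the quadratic variation in the martingale problem by polarisation, verify the (polarised) isometry on simple integrands by an explicit bracket computation using the $\mathcal{F}_{a_k}$-measurability of the $Z_k$, and then extend by density of $\mathcal{S}$ and completeness of $\mathcal{M}^2$. Both arguments rest on exactly the same single input — the covariance structure of $M_X$ — so there is no gap; the difference is that the paper treats the stochastic integral on all of $\mathcal{L}^2(M_X)$ as already constructed (see its footnote identifying $\mathcal{L}^2(M_X)$ with Walsh's $\mathcal{P}_M$) and only needs to compute $Q_{M_X}$, whereas your version is self-contained, simultaneously re-deriving the definition of $I_{M_X}(\phi)$ for general $\phi$ as an $\mathcal{M}^2$-limit, at the cost of repeating the standard density/extension machinery. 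Your instinct that the delicate step is the bracket computation on generators (overlap of time intervals and spatial sets) is exactly the content that the paper outsources to \cite{Wal86}.
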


\begin{proof}
Let $Q_{M_X}$ be the covariation of $M_X$ and $\phi$, $\psi \in \mathcal{L}^2(M_X)$. Then, by adapting the proof of Theorem 2.5 in \cite{Wal86} to our setting, we get that 
\begin{align*}
&\int_0^t \int_{\mathbb{R}^d} \phi (s,x) M_X(ds, dx) \int_0^t \int_{\mathbb{R}^d} \psi(s,x) M_X(ds,dx)  \\
-\quad  & \int_0^t \int_{\mathbb{R}^d} \int_{\mathbb{R}^d} \phi(s,x) \psi(s,y) Q_{M_X}(ds, dx, dy)
\end{align*}
is a martingale for all $\phi$, $\psi \in \mathcal{L}^2(M_X)$ and thus 
\begin{align*}
&\mathbb{E} \left[\int_0^t \int_{\mathbb{R}^d} \phi (s,x) M_X(ds, dx) \int_0^t \int_{\mathbb{R}^d} \psi(s,x) M_X(ds,dx)  \right] \\
=\quad  & \mathbb{E} \left[ \int_0^t \int_{\mathbb{R}^d} \int_{\mathbb{R}^d} \phi(s,x) \psi(s,y) Q_{M_X}(ds, dx, dy) \right].
\end{align*}
In the scenario we consider $M_X$ is orthogonal. For orthogonal martingale measures, the covariation $Q_{M_X}$ simplifies to $Q([0,t], B, B ) = \nu([0,t], B)$ with $\nu$ being a measure on $\mathbb{R}^d \times [0,T]$. From Example 7.1.3 in \cite{Daw93} we know that $\nu$ has the following form if $M_X$ is the martingale measure associated with the $B(A,c)$-superprocess:
\begin{equation*}
\nu( ds, dx) = c X(s)(dx) ds.
\end{equation*}

As $c=1$ in our setting, this yields
\begin{align}
\begin{split}
&\mathbb{E} \left[\int_0^t \int_{\mathbb{R}^d} \phi (s,x) M_X(ds, dx) \int_0^t \int_{\mathbb{R}^d} \psi(s,x) M_X(ds,dx) \right] \\
=\quad  & \mathbb{E} \left[  \int_0^t \int_{\mathbb{R}^d} \phi(s,x) \psi(s,x) X(s)(dx) ds \right]
\end{split}
\label{eq1}
\end{align}
for all $t \in [0,T]$ and therefore
\begin{align*}
\| I_{M_X}(\phi) \|_{\mathcal{M}^2}^2 &= \mathbb{E} \left[  \left( \int_0^T \int_{\mathbb{R}^d} \phi(s,x) M_X(ds, dx) \right) ^2 \right] \\
&= \mathbb{E} \left[ \int_0^T \int_{\mathbb{R}^d} \phi^2 (s,x) X(s) (dx)ds \right] \\
&= \| \phi \|_{\mathcal{L}^2(M_X)}^2 .
\end{align*}
\end{proof}

For any function in $\phi = \phi_{\Gamma, a, h} \in U$ we have
\begin{align*}
I_{M_X}(\phi)(t) &= \int_0^t \int_{\mathbb{R}^d} \phi_{\Gamma,a,h}(s,x) M_X(ds, dx) \\
&= \int_0^t \int_{\mathbb{R}^d} \Gamma \cdot h(x) 1_{(a ,T]}(s) M_X(ds, dx) \\
&= \Gamma \cdot ( M(t)(h) - M(a)(h)) 1_{t >a}\\
&= \Gamma \cdot \left( \vphantom{\int_0^2}\langle X(t), h\rangle - \langle X(a) , h \rangle - \int_{a}^t \int_{\mathbb{R}^d} \frac{1}{2} \Delta h(y) X(s)(dy) ds\right) 1_{t > a}.
\end{align*}
From the martingale problem defining the super-Brownian motion, we get that $(M(t)(h))_{t \in [0,T]}$ is a martingale as $h \in \mathbb{S}(\mathbb{R}^d) \subset D(\frac{1}{2} \Delta)$ and thus $(I_{M_X}(\phi)(t))_{t \in [0,T]}$ is also a martingale for any $\phi \in U$. The process is also square-integrable as we have for all $t \in [0,T]$
\begin{align*}
\mathbb{E} [(I_{M_X}(\phi)(t))^2] &= \mathbb{E}\left[ \left( \int_0^t \int_{\mathbb{R}^d} \phi (s,x) M_X(ds, dx) \right)^2 \right] \\
&= \mathbb{E} \left[  \int_0^t \int_{\mathbb{R}^d} \phi^2(s,x) X(s)(dx)ds \right] \\ 
& \leq \mathbb{E} \left[  \int_0^T \int_{\mathbb{R}^d} \phi^2(s,x) X(s)(dx)ds \right] \\
& < \infty, 
\end{align*}
since $\phi \in \mathcal{L}^2(M_X)$. Hence $I_{M_X}(\phi)$ is square-integrable and thus $I_{M_X}(U) \subset \mathcal{M}^2$. \\

Next, consider the function $F$ of form
\begin{align*}
F :\quad \Lambda_T \quad &\rightarrow  \quad \mathbb{R} \\
(t,\omega) \quad &\mapsto \quad \Gamma{(\omega)}  \left( \vphantom{\int_0^2}\langle \omega(t), h\rangle - \langle \omega(a) , h \rangle - \int_{a}^t \int_{\mathbb{R}^d} \frac{1}{2} \Delta h(y)  \omega(s)(dy) ds\right) 1_{t > a} .
\end{align*}
Plugging $X_t$ into $F$ for $\omega$ yields
\begin{align*}
F(t, X_t) &= \Gamma(X_t) \left( \langle X_t(t) , h \rangle - \langle X_t(a) , h \rangle - \int_a^t \int_{\mathbb{R}^d} \frac{1}{2}\Delta h(x) X_t(s)(dx)ds \right) 1_{t > a} \\
 &= \Gamma(X_t) \left( \langle X(t) , h \rangle - \langle X(a) , h \rangle - \int_a^t \int_{\mathbb{R}^d} \frac{1}{2}\Delta h(x) X(s)(dx)ds \right) 1_{t > a}
\end{align*}
and, as $X_t(\omega) = \omega_t$ and $X(t)(\omega) = \omega(t)$, we get
\begin{align*}
F(t, X_t)(\omega) = \Gamma(\omega_a) \left( \langle \omega(t) , h \rangle - \langle \omega(a), h) - \int_a^t \int_{\mathbb{R}^d} \langle \omega(s), \frac{1}{2} \Delta h(x) \omega(s)(dx) ds \right),
\end{align*}
from which we get, as $\Gamma$ is $\mathcal{F}_a$-measurable, that $F(t, X_t) = I_{M_X} (\phi_{\Gamma,a,h})(t)$. \\

As, in addition, for any path $\omega \in C([0,T], M_F(E))$, it holds 
\begin{align*}
&\lim_{\varepsilon \downarrow 0} \frac{1}{\varepsilon} \left( \vphantom{\int_0^1} \langle (\omega + \varepsilon \delta_x 1_{[t,T]})(t), h\rangle - \langle (\omega + \varepsilon \delta_x 1_{[t,T]})(a) , h \rangle \right.\\
& \hphantom{\lim_{\varepsilon \downarrow 0} \frac{1}{\varepsilon} \quad \langle (\omega + \varepsilon \delta_x 1_{[t,T]})(t), h\rangle} -\int_{a}^t \int_{\mathbb{R}^d} \frac{1}{2} \Delta h(y) (\omega + \varepsilon \delta_x 1_{[t,T]})(r)(dy) dr  \\
& \hphantom{\quad \lim_{\varepsilon \downarrow 0} \frac{1}{\varepsilon}} \left.-  \, \langle \omega(t), h\rangle + \langle \omega(a) , h \rangle +\int_{a}^t \int_{\mathbb{R}^d} \frac{1}{2} \Delta h(y) \omega(r)(dy) dr \right) \\
= \ & \lim_{\varepsilon \downarrow 0} \frac{1}{\varepsilon} \left( \varepsilon h(x) - \varepsilon \int_{a}^t \int_{\mathbb{R}^d} \frac{1}{2} \Delta h(y) 1_{[t,T]}(r) \delta_x(dy) dr \right) \\
= \ & h(x)
\end{align*}
and, as $\Gamma$ is $\mathcal{F}_a$-measurable, for $t \in (a, T]$ it holds
\begin{equation*}
\lim_{\varepsilon \rightarrow 0} \frac{1}{\varepsilon} \left( \Gamma(\omega + \varepsilon \delta_x 1_{[t,T]}) - \Gamma(\omega) \right) = 0,
\end{equation*}
we obtain, for all $(t, \omega) \in \Lambda_T$,
\begin{equation*} 
\mathcal{D}_x F(t, \omega) = \Gamma {(\omega)} 1_{(a ,T]}(t) h(x) = \phi_{\Gamma, a, h}(t,x).
\end{equation*}

Now, for a process $Y$ defined by 
\begin{equation*}
Y(t) = I_{M_X} (\phi_{\Gamma,a,h})(t) = F(t, X_t)
\end{equation*}
for $t \in [0,T]$, we can define an operator $\nabla_M$ of the form 
\begin{align}
\begin{split}
\nabla_M: \quad  I_{M_X}(U) \quad & \rightarrow \quad \mathcal{L}^2(M_X) \\
Y \quad & \mapsto \quad \nabla_M Y,
\end{split}
\label{DefNab}
\end{align}
where $\nabla_M Y$ is given by
\begin{equation*}
\nabla_M Y : \quad (\omega, t, x) \quad \mapsto \quad \nabla_M Y(\omega, t, x) := \mathcal{D}_x F (t, X_t(\omega)) = \mathcal{D}_x F (t, \theta) |_{\theta = X_t(\omega)} .
\end{equation*}
Further, by the definition of $Y$, we get the martingale representation 
\begin{equation}
Y(t) = \int_0^t \int_{\mathbb{R}^d} \nabla_M Y(s,x) M_X(ds, dx) ,
\label{mart}
\end{equation}
which is equal to 
\begin{equation*}
F(t, X_t) = \int_0^t \int_{\mathbb{R}^d} \mathcal{D}_x F(s, X_s) M_{X}(ds, dx).
\end{equation*}

Thus, we derived a martingale representation formula for martingales in the subspace $I_{M_X}(U)$ of $\mathcal{M}^2$ using the operator $\nabla_M$ defined by \eqref{DefNab} on $I_{M_X}(U)$. \\

In \cite{EP94} the authors proved that, if $Y$ belongs to $\mathcal{M}^2$, there exists a unique $\rho \in \mathcal{L}^2 (M_X)$ such that
\begin{equation}
Y(t) =  \int_0^t \int_{\mathbb{R}^d} \rho (s,x) M_X(ds, dx) \quad \forall t \geq 0 
\label{martRepGen}
\end{equation}
holds $\mathbb{P}$-a.s.. Consequently, the representation in \eqref{mart} is unique. Further, this yields that the mapping $I_{M_X}$ is bijective. These two properties allow us to extend $\nabla_M$ to all processes $Y \in \mathcal{M}^2$ by using the following result.

\begin{Prop}
The space $\{\nabla_M Y \ | \ Y \in I_{M_X}(U) \}$ is dense in $\mathcal{L}^2(M_X)$ and the space $I_{M_X}(U)$ is dense in $\mathcal{M}^2$.
\end{Prop}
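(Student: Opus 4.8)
The plan is to obtain both density statements as consequences of facts already in place: the identity $\nabla_M I_{M_X}(\phi)=\phi$ for $\phi\in U$ established in the computation preceding the statement, the isometry property of $I_{M_X}$ from Proposition \ref{PropIso}, its bijectivity coming from the martingale representation of \cite{EP94}, and the density of $U$ in $\mathcal{L}^2(M_X)$ from Proposition \ref{AppendC}.

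First I would treat the set $\{\nabla_M Y \mid Y\in I_{M_X}(U)\}$. For a generator $\phi_{\Gamma,a,h}$ of $U$ and the associated functional $F$, the process $Y=I_{M_X}(\phi_{\Gamma,a,h})$ satisfies $Y(t)=F(t,X_t)$, and we computed $\nabla_M Y(\omega,t,x)=\mathcal{D}_xF(t,X_t(\omega))=\phi_{\Gamma,a,h}(\omega,t,x)$. Since $I_{M_X}$ is injective, being an isometry, and both $I_{M_X}$ and $\nabla_M$ are linear, this passes to linear combinations, so $\nabla_M I_{M_X}(\phi)=\phi$ for every $\phi\in U$ and hence $\{\nabla_M Y \mid Y\in I_{M_X}(U)\}=U$. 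By Proposition \ref{AppendC} this set is therefore dense in $\mathcal{L}^2(M_X)$.

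For the second claim I would invoke the elementary fact that a surjective isometry between Hilbert spaces maps dense subsets onto dense subsets. Concretely: given $Z\in\mathcal{M}^2$, the representation result of \cite{EP94} supplies a (unique) $\rho\in\mathcal{L}^2(M_X)$ with $Z=I_{M_X}(\rho)$; by Proposition \ref{AppendC} pick $\phi_n\in U$ with $\|\phi_n-\rho\|_{\mathcal{L}^2(M_X)}\to 0$, and then the isometry of Proposition \ref{PropIso} gives $\|I_{M_X}(\phi_n)-Z\|_{\mathcal{M}^2}=\|\phi_n-\rho\|_{\mathcal{L}^2(M_X)}\to 0$ with $I_{M_X}(\phi_n)\in I_{M_X}(U)$. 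Since $I_{M_X}(U)\subset\mathcal{M}^2$ has already been verified, this shows $I_{M_X}(U)$ is dense in $\mathcal{M}^2$.

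I do not expect a genuine obstacle here. The only point requiring a little care is the first step: checking that $\nabla_M$ is consistently defined on all of $I_{M_X}(U)$, not merely on the generators, and that the identity $\nabla_M\circ I_{M_X}=\mathrm{id}$ on $U$ is preserved under forming linear combinations — both of which are immediate from linearity together with the injectivity of $I_{M_X}$. Everything else is a direct application of Propositions \ref{AppendC} and \ref{PropIso} and of the existence and uniqueness of the martingale representation from \cite{EP94}.
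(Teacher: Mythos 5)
Your proposal is correct and follows essentially the same route as the paper: the identity $\{\nabla_M Y \mid Y \in I_{M_X}(U)\} = U$ combined with Proposition \ref{AppendC} gives the first density, and the fact that $I_{M_X}$ is a bijective isometry (surjectivity coming from the representation theorem of \cite{EP94}) transfers the density of $U$ to $I_{M_X}(U)$ in $\mathcal{M}^2$. You merely spell out the well-definedness of $\nabla_M$ on linear combinations and the approximation argument more explicitly than the paper does.
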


\begin{proof}
As $U$ is dense in $\mathcal{L}^2(M_X)$ (see Proposition \ref{AppendC}),
\begin{equation*}
U = \{ \nabla_M Y \ | \  Y \in I_{M_X}(U) \} \subset \mathcal{L}^2 (M_X)
\end{equation*}
yields that $\{\nabla_M Y \ |  \  Y \in I_{M_X}(U) \}$ is dense in $\mathcal{L}^2(M_X)$. Further, as $I_{M_X}$ is a bijective isometry, we get the density of $I_{M_X}(U)$ in $\mathcal{M}^2$. \\
\end{proof}

This density result allows to prove the following proposition that can be interpreted as an integration by parts formula on $[0,T] \times \mathbb{R}^d \times \Omega$. 

\begin{Prop} \label{PropNabla}
If $Y \in I_{M_X}(U)$, then $\nabla_M Y$ is the unique element in $\mathcal{L}^2 (M_X)$ such that
\begin{equation}
\mathbb{E} \left[ Y(T) Z(T) \right] = \mathbb{E} \left[  \int_0^T \int_{\mathbb{R}^d} \nabla_M Y(s,x) \nabla_M Z(s,x) X(s)(dx) ds \right]
\label{eq3}
\end{equation}
holds for all $Z \in I_{M_X}(U)$. 
\end{Prop}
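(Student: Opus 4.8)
The plan is to split the argument into an existence part and a uniqueness part. The existence part will be an immediate consequence of the polarisation identity \eqref{eq1} obtained in the proof of Proposition~\ref{PropIso}, while the uniqueness part will rest on the density of $U$ in $\mathcal{L}^2(M_X)$ established in Proposition~\ref{AppendC}.

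For existence, I would recall from \eqref{mart} that every $Y\in I_{M_X}(U)$ admits the representation $Y(t)=\int_0^t\int_{\mathbb{R}^d}\nabla_M Y(s,x)\,M_X(ds,dx)$, and likewise $Z(t)=\int_0^t\int_{\mathbb{R}^d}\nabla_M Z(s,x)\,M_X(ds,dx)$ for any $Z\in I_{M_X}(U)$, where $\nabla_M Y,\nabla_M Z\in\mathcal{L}^2(M_X)$ by the definition \eqref{DefNab} of $\nabla_M$. Applying \eqref{eq1} with $\phi=\nabla_M Y$, $\psi=\nabla_M Z$ and $t=T$ then yields \eqref{eq3} directly; both sides are finite, the left one by Cauchy--Schwarz in $L^2(\mathbb{P})$ since $Y(T),Z(T)\in L^2(\mathbb{P})$, and the right one by the Cauchy--Schwarz inequality in $\mathcal{L}^2(M_X)$.

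For uniqueness, I would first note that the bilinear form $(\phi,\psi)\mapsto\mathbb{E}\bigl[\int_0^T\int_{\mathbb{R}^d}\phi(s,x)\psi(s,x)X(s)(dx)\,ds\bigr]$ appearing in \eqref{eq3} is precisely the inner product of the Hilbert space $\mathcal{L}^2(M_X)$. If $\tilde\phi\in\mathcal{L}^2(M_X)$ also satisfies \eqref{eq3} with $\tilde\phi$ in place of $\nabla_M Y$ for every $Z\in I_{M_X}(U)$, then subtracting the two identities gives
\begin{equation*}
\mathbb{E}\left[\int_0^T\int_{\mathbb{R}^d}\bigl(\nabla_M Y(s,x)-\tilde\phi(s,x)\bigr)\nabla_M Z(s,x)\,X(s)(dx)\,ds\right]=0
\end{equation*}
for all $Z\in I_{M_X}(U)$. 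By the construction of $\nabla_M$ on $I_{M_X}(U)$ one has $\{\nabla_M Z\mid Z\in I_{M_X}(U)\}=U$, which is dense in $\mathcal{L}^2(M_X)$. Hence $\nabla_M Y-\tilde\phi$ is orthogonal in $\mathcal{L}^2(M_X)$ to a dense subspace, so it is the zero element of $\mathcal{L}^2(M_X)$, i.e. $\tilde\phi=\nabla_M Y$.

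I do not expect a genuine obstacle here, since the analytic content — the isometry and polarisation identity \eqref{eq1}, and the density of $U$ — has already been established. The only point needing a little care is the uniqueness step, where one must recognise the bilinear form in \eqref{eq3} as the $\mathcal{L}^2(M_X)$-inner product and use that $\nabla_M Z$ ranges over the dense set $U$ as $Z$ ranges over $I_{M_X}(U)$, after which the standard ``orthogonal to a dense set implies zero'' argument closes the proof.
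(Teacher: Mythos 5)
Your proposal is correct and follows essentially the same route as the paper: existence via \eqref{mart} and the polarisation identity \eqref{eq1}, and uniqueness by subtracting, recognising the $\mathcal{L}^2(M_X)$-inner product, and using that $\{\nabla_M Z \mid Z \in I_{M_X}(U)\} = U$ is dense. No gaps.
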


\begin{proof}
From \eqref{eq1} and \eqref{mart} we get that, for every $Y$, $Z \in I_{M_X}(U)$, 
\begin{align*}
\mathbb{E} [ Y(T)Z(T)] = \ & \mathbb{E} \left[ \int_0^T \int_{\mathbb{R}^d} \nabla_M Y(s,x) M_X(ds,dx) \int_0^T \int_{\mathbb{R}^d} \nabla_M Z(s,x) M_X(ds, dx) \right] \\
= \ & \mathbb{E} \left[  \int_0^T \int_{\mathbb{R}^d} \nabla_M Y(s,x) \nabla_M Z(s,x) X(s)(dx) ds \right] 
\end{align*}
holds. \\

To prove the uniqueness, let $\psi \in \mathcal{L}^2(M_X)$ be another process such that for $Z \in I_{M_X}(U)$ we have
\begin{equation*}
\mathbb{E} \left[ Y(T)Z(T) \right] \\
 = \mathbb{E} \left[  \int_0^T \int_{\mathbb{R}^d} \psi (s, x) \nabla_M Z(s,x) X(s)(dx)ds \right].
\end{equation*}
Then, by subtraction
\begin{equation*}
0 = \mathbb{E} \left[ \int_0^T \int_{\mathbb{R}^d} (\psi(s, x) - \nabla_M Y(s,x)) \nabla_M Z(s,x) X(s) (dx) ds \right]
\end{equation*}
for all $Z \in I_{M_X}(U)$, which yields the uniqueness of $\nabla_M Y$ in $\mathcal{L}^2(M_X)$ as $\{ \nabla_M Z \ | \ Z \in I_{M_X}(U)\}$ is dense in $\mathcal{L}^2(M_X)$. \\
\end{proof}

The interpretation as an integration by parts formula becomes clear when we consider the following alternative form of equation \eqref{eq3}, which holds for all $\phi \in \mathcal{L}^2(M_X)$:
\begin{equation*}
\mathbb{E} \left[ Y(T) \int_0^T \int_{\mathbb{R}^d} \phi(s,x) M_X(ds,dx) \right] = \mathbb{E} \left[  \int_0^T \int_{\mathbb{R}^d} \nabla_M Y(s,x) \phi(s,x) X(s)(dx) ds \right] .
\end{equation*}

Now, we have all the necessary results on hand to introduce the extension of $\nabla_M$ to $\mathcal{M}^2$.

\begin{Thm} \label{Thm.Ext}
The operator defined in \eqref{DefNab} can be extended to an operator 
\begin{align*}
\nabla_M: \quad \mathcal{M}^2 \quad & \rightarrow \quad \mathcal{L}^2(M_X) \\
Y \quad & \mapsto \quad \nabla Y .
\end{align*}
This operator is a bijection and the unique continuous extension of the operator defined in \eqref{DefNab} given by the following: For a given $Y \in \mathcal{M}^2$, $\nabla_M Y$ is the unique element in $\mathcal{L}^2(M_X)$ such that
\begin{equation}
\mathbb{E}[Y(T) Z(T)] = \mathbb{E} \left[ \int_0^T \int_{\mathbb{R}^d} \nabla_M Y(s,x) \nabla_M Z(s,x) X(s)(dx) ds \right]
\label{eq.Ext}
\end{equation}
holds for all $Z  \in I_{M_X}(U)$.
\end{Thm}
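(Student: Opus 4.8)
The plan is to pin the extension down explicitly rather than construct it abstractly. On $I_{M_X}(U)$ the operator from \eqref{DefNab} satisfies $\nabla_M I_{M_X}(\phi) = \phi$ for every $\phi \in U$, so there it coincides with the inverse of $I_{M_X}$. By Proposition \ref{PropIso} the map $I_{M_X} : \mathcal{L}^2(M_X) \to \mathcal{M}^2$ is an isometry, and it is onto — hence bijective — because \eqref{martRepGen} assigns to each $Y \in \mathcal{M}^2$ a unique integrand $\rho \in \mathcal{L}^2(M_X)$ with $Y = I_{M_X}(\rho)$. Therefore $I_{M_X}^{-1} : \mathcal{M}^2 \to \mathcal{L}^2(M_X)$ is itself a bijective isometry, and I would simply set $\nabla_M := I_{M_X}^{-1}$ on all of $\mathcal{M}^2$. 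This at once shows that $\nabla_M$ is defined on all of $\mathcal{M}^2$, is a bijection, is norm-preserving and hence continuous, and agrees with \eqref{DefNab} on $I_{M_X}(U)$.

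For the uniqueness of the continuous extension, observe that the restriction of $\nabla_M$ to $I_{M_X}(U)$ is an isometry, hence uniformly continuous, while $I_{M_X}(U)$ is dense in the complete space $\mathcal{M}^2$ by the density result established above. A uniformly continuous map on a dense subset of a complete metric space extends uniquely, so any continuous operator on $\mathcal{M}^2$ that agrees with \eqref{DefNab} on $I_{M_X}(U)$ must equal $I_{M_X}^{-1}$.

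It remains to establish the integration-by-parts characterization. For existence, fix $Y \in \mathcal{M}^2$; then $Y = I_{M_X}(\nabla_M Y)$ by construction, and any $Z \in I_{M_X}(U)$ can be written $Z = I_{M_X}(\nabla_M Z)$ with $\nabla_M Z \in U \subset \mathcal{L}^2(M_X)$. Applying the covariation identity \eqref{eq1} at $t = T$ with $\phi = \nabla_M Y$ and $\psi = \nabla_M Z$ yields \eqref{eq.Ext} directly. For uniqueness, suppose $\psi \in \mathcal{L}^2(M_X)$ also satisfies \eqref{eq.Ext} for all $Z \in I_{M_X}(U)$; subtracting the two identities gives
\[
\mathbb{E}\left[ \int_0^T \int_{\mathbb{R}^d} \bigl( \psi(s,x) - \nabla_M Y(s,x) \bigr)\, \nabla_M Z(s,x)\, X(s)(dx)\, ds \right] = 0
\]
for every $Z \in I_{M_X}(U)$. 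The left-hand side is the $\mathcal{L}^2(M_X)$-inner product of $\psi - \nabla_M Y$ against $\nabla_M Z$, and $\{ \nabla_M Z : Z \in I_{M_X}(U) \} = U$ is dense in $\mathcal{L}^2(M_X)$ by Proposition \ref{AppendC}; hence $\psi = \nabla_M Y$. Specializing to $Y \in I_{M_X}(U)$ recovers Proposition \ref{PropNabla}, confirming that the extension is consistent with \eqref{DefNab}.

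I do not expect a serious obstacle here: the statement is a routine Hilbert-space extension once two genuinely non-trivial inputs are granted, namely the existence-and-uniqueness of the martingale representation of \cite{EP94} (which is exactly what makes $I_{M_X}$ surjective, and therefore invertible) and the covariation/isometry identity \eqref{eq1} underlying Proposition \ref{PropIso}. The only point that needs a moment's care is checking that \eqref{eq.Ext} is tested against a rich enough family of processes $Z$; this is guaranteed because $\nabla_M$ maps $I_{M_X}(U)$ onto the dense set $U$.
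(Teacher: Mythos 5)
Your proof is correct, and it reaches the same conclusion from the same essential inputs (Proposition \ref{PropIso}, the representation result \eqref{martRepGen} of \cite{EP94}, the density of $U$ in $\mathcal{L}^2(M_X)$ and of $I_{M_X}(U)$ in $\mathcal{M}^2$, and the covariation identity \eqref{eq1}), but it is organized differently from the paper's argument. The paper first invokes the BLT theorem to produce an abstract unique continuous extension of the operator in \eqref{DefNab}, then separately identifies that extension with the integrand $\rho$ of \eqref{martRepGen}, and only at the very end establishes bijectivity. You instead make the identity $\nabla_M = I_{M_X}^{-1}$ the centerpiece: since $I_{M_X}$ is an isometry (Proposition \ref{PropIso}) and is surjective by \eqref{martRepGen}, its inverse is a globally defined bijective isometry that visibly restricts to \eqref{DefNab} on $I_{M_X}(U)$, and uniqueness of the extension is then just the elementary fact that two continuous maps agreeing on a dense subset agree everywhere. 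This buys you a more transparent proof in which bijectivity and continuity are immediate rather than verified after the fact, and it avoids the BLT machinery entirely; what the paper's ordering buys is that the extension is constructed before surjectivity of $I_{M_X}$ is used, so the roles of the abstract extension theorem and of the deep input from \cite{EP94} are kept separate. Your verification of the characterization \eqref{eq.Ext} (existence via \eqref{eq1}, uniqueness via density of $U = \{\nabla_M Z \mid Z \in I_{M_X}(U)\}$ in $\mathcal{L}^2(M_X)$) matches the paper's Proposition \ref{PropNabla} argument. One cosmetic remark: for the uniqueness of the continuous extension you only need continuity plus density of $I_{M_X}(U)$, not uniform continuity and completeness of the target, though of course your stronger hypotheses hold here.
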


\begin{proof}
As $\nabla_M : I_{M_X}(U) \rightarrow \mathcal{L}^2(M_X)$ is a bounded linear operator, $\mathcal{L}^2(M_X)$ is a Hilbert space and $I_{M_X}(U)$ is dense in $\mathcal{M}^2$, the the BLT theorem (see e.g. Theorem 5.19 in \cite{HuNa01}) yields the existence of a unique continuous extension $\nabla_M : \mathcal{M}^2 \rightarrow \mathcal{L}^2(M_X)$. To prove that \eqref{eq.Ext} defines the extension, we have to prove that its restriction to $I_{M_X}(U)$ is equal to the initial operator. As we immediately get this from Proposition \ref{PropNabla}, the unique continuous extension is given by \eqref{eq.Ext}. \\

For $Y \in \mathcal{M}^2$, there exists a unique $\rho \in \mathcal{L}^2(M_X)$ such that \eqref{martRepGen} holds. This $\rho$ satisfies \eqref{eq.Ext} as we get from combining \eqref{martRepGen} with \eqref{mart} and \eqref{eq1} that 
\begin{align*}
\mathbb{E}[Y(T) Z(T)] &= \mathbb{E}\left[ \int_0^T \int_{\mathbb{R}^d} \rho (s,x) M_X(ds,dx) Z(T)\right] \\
&= \mathbb{E} \left[ \int_0^T \int_{\mathbb{R}^d} \rho(s,x) \nabla_M Z(s,x) X(s)(dx) ds \right]
\end{align*}
holds for all $Z \in I_{M_X}(U)$. The uniqueness of the integrand in \eqref{martRepGen} then yields that $\rho$ and $\nabla_M Y$ coincide in $\mathcal{L}^2(M_X)$. \\

Next, assume $Y$, $Y' \in \mathcal{M}^2$ with $\nabla_M Y = \nabla_M Y'$ for $\nabla_M Y$, $\nabla_M Y' \in \mathcal{L}^2(M_X)$. Then, for all $Z \in I_{M_X}(U)$,
\begin{align*}
0 &= \mathbb{E} \left[ \int_0^T \int_{\mathbb{R}^d} (\nabla_M Y(s,x) - \nabla_M Y'(s,x) ) \nabla_M Z(s,x) X(s)(dx) ds \right] \\
&= \mathbb{E} \left[  \left(\int_0^T \int_{\mathbb{R}^d} \nabla_M Y(s,x) M_X(ds,dx) ds  - \int_0^T \int_{\mathbb{R}^d} \nabla_M Y'(s,x) M_x(ds,dx)  \right)  Z(T) \right] \\
&= \mathbb{E}[ (Y(T) - Y'(T)) Z(T)],
\end{align*}
which implies that $Y = Y'$ in $\mathcal{M}^2$ as $I_{M_X}(U)$ is dense in $\mathcal{M}^2$. Therefore, the operator $\nabla_M$ is injective. Further, as for every $\phi \in \mathcal{L}^2(M_X)$ there exists the process
\begin{equation*}
Y =  \int_0^\cdot \int_{\mathbb{R}^d} \phi(s,x) M_X(ds, dx) \in \mathcal{M}^2
\end{equation*}
for which $\nabla_M Y = \phi$ holds, the operator is also surjective and thus bijective. 
\end{proof}

The extension of $\nabla_M$ from $I_{M_X}(U)$ to $\mathcal{M}^2$ now immediately yields the following martingale representation for square-integrable martingales. \\

\begin{Thm}
For any square-integrable $(\mathcal{F}_t)_t$-martingale $Y$ and every $t \in [0,T]$ it holds
\begin{equation*}
Y(t) = Y(0) + \int_0^t \int_{\mathbb{R}^d} \nabla_M Y(s,x) M_X(ds, dx) \quad \text{$\mathbb{P}$-a.s. .}
\label{martRep}
\end{equation*}
\end{Thm}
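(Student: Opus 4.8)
The plan is to reduce the statement to the machinery already assembled: the existence of a martingale representation from \cite{EP94} (equation \eqref{martRepGen}) together with the identification of its integrand with $\nabla_M Y$ provided by Theorem \ref{Thm.Ext}.

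First I would pass to a centered martingale. Given a square-integrable $(\mathcal{F}_t)_t$-martingale $Y$, set $\tilde Y := Y - Y(0)$. Since $Y(T) \in L^2(\mathbb{P})$ and $Y(0) = \mathbb{E}[Y(T)\mid \mathcal{F}_0]$ is therefore also square-integrable (indeed, as $X(0)=m$ $\mathbb{P}$-a.s., $\mathcal{F}_0$ is $\mathbb{P}$-trivial, so $Y(0)$ is a.s.\ constant), the process $\tilde Y$ is a square-integrable martingale with $\tilde Y(0)=0$, i.e.\ $\tilde Y \in \mathcal{M}^2$. By definition we set $\nabla_M Y := \nabla_M \tilde Y \in \mathcal{L}^2(M_X)$; note that $Y$ itself need not lie in $\mathcal{M}^2$, only its centered version does, and this is the only place where a small convention has to be fixed.

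Next I would invoke \eqref{martRepGen}: by \cite{EP94} there is a unique $\rho \in \mathcal{L}^2(M_X)$ with $\tilde Y(t) = \int_0^t \int_{\mathbb{R}^d} \rho(s,x)\, M_X(ds,dx)$ for all $t \in [0,T]$, $\mathbb{P}$-a.s. It then remains to identify $\rho$ with $\nabla_M \tilde Y$. This was in fact carried out inside the proof of Theorem \ref{Thm.Ext}: combining \eqref{martRepGen} with \eqref{mart} and the isometry identity \eqref{eq1} shows that $\rho$ satisfies \eqref{eq.Ext} for all $Z \in I_{M_X}(U)$, and the uniqueness clause of Theorem \ref{Thm.Ext} forces $\rho = \nabla_M \tilde Y = \nabla_M Y$ as elements of $\mathcal{L}^2(M_X)$. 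Substituting back gives $Y(t) = Y(0) + \tilde Y(t) = Y(0) + \int_0^t \int_{\mathbb{R}^d} \nabla_M Y(s,x)\, M_X(ds,dx)$; since $\rho$ and $\nabla_M Y$ agree in $\mathcal{L}^2(M_X)$, the two stochastic integrals against $M_X$ are indistinguishable processes, so the identity holds simultaneously for all $t\in[0,T]$ off a single $\mathbb{P}$-null set.

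I do not expect a genuine obstacle here: the substantive work — the BLT extension, the isometry \eqref{eq1}, the EP94 existence result, and the computation of $\mathcal{D}_x F$ for the generators of $U$ — has all been done in the preceding propositions and theorems. The only points requiring a little care are the centering convention when $Y(0)\neq 0$ and upgrading the a.s.\ equality from fixed $t$ to the whole path, which follows from the (right-)continuity of both sides together with the uniqueness in \eqref{martRepGen}.
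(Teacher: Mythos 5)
Your proposal is correct and follows essentially the same route as the paper: reduce to $\tilde Y = Y - Y(0) \in \mathcal{M}^2$, invoke the existence and uniqueness of the integrand $\rho$ from \eqref{martRepGen}, and identify $\rho = \nabla_M Y$ via the argument in the proof of Theorem \ref{Thm.Ext}. Your added remarks on the convention $\nabla_M Y := \nabla_M \tilde Y$ and on upgrading the a.s.\ identity to all $t$ simultaneously are sensible clarifications of points the paper leaves implicit, but they do not constitute a different approach.
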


\begin{proof}
First, assume $Y \in \mathcal{M}^2$. Then there exists a unique $\rho \in \mathcal{L}^2(M_X)$ such that \eqref{martRepGen} holds and from the proof of Theorem \ref{Thm.Ext} we get that this $\rho$ satisfies \eqref{eq.Ext}. Therefore, by the uniqueness of $\rho$ and the definition of $\nabla_M Y$, we have $\rho = \nabla_M Y$ in $\mathcal{L}^2(M_X)$ and thus 
\begin{equation*}
Y(t) = \int_0^t \int_{\mathbb{R}^d} \nabla_M Y(s,x) M_X(ds,dx)
\end{equation*}
holds $\mathbb{P}$-a.s.. \\

Now, let $Y$ be a square-integrable $(\mathcal{F}_t)_t$-integral. Then $\tilde{Y} = Y - Y(0) \in \mathcal{M}^2$ and thus we can apply the above to $\tilde{Y}$. Adding $Y(0)$ to both sides of the resulting representation for $\tilde{Y}$ yields the desired martingale representation formula for $Y$. 
\end{proof}

We conclude this paper with the two following properties of the extended operator $\nabla_M$ defined on $\mathcal{M}^2$, which are worth mentioning.

\begin{Lem}
The operator $\nabla_M$ defined on $\mathcal{M}^2$ is an isometry and the adjoint operator of $I_{M_X}$, the stochastic integral with respect to the martingale measure $M_X$.
\end{Lem}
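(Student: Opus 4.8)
The plan is to prove the two claims separately, both reducing to the isometry relation \eqref{eq1} established in Proposition \ref{PropIso} together with the defining property \eqref{eq.Ext} of the extended operator $\nabla_M$. For the isometry claim, I would take an arbitrary $Y \in \mathcal{M}^2$ and apply \eqref{martRep} (the martingale representation just proved) to write $Y(t) - Y(0) = \int_0^t \int_{\mathbb{R}^d} \nabla_M Y(s,x)\, M_X(ds,dx)$, so that $\|Y - Y(0)\|_{\mathcal{M}^2}^2 = \mathbb{E}[(\int_0^T \int_{\mathbb{R}^d} \nabla_M Y(s,x)\, M_X(ds,dx))^2]$. By the isometry in Proposition \ref{PropIso} this equals $\|\nabla_M Y\|_{\mathcal{L}^2(M_X)}^2$. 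Since the natural norm on $\mathcal{M}^2$ already quotients out the initial value (elements of $\mathcal{M}^2$ have initial value zero), this shows $\|Y\|_{\mathcal{M}^2} = \|\nabla_M Y\|_{\mathcal{L}^2(M_X)}$ for $Y \in \mathcal{M}^2$, i.e. $\nabla_M$ is an isometry. In fact this is almost immediate from Theorem \ref{Thm.Ext}, where $\nabla_M$ is identified with the integrand $\rho$ of \eqref{martRepGen} and $I_{M_X}$ is shown to be the inverse of $\nabla_M$; since $I_{M_X}$ is an isometry by Proposition \ref{PropIso}, its inverse $\nabla_M$ is too.

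For the adjoint claim, I would first fix the pairing: $I_{M_X} : \mathcal{L}^2(M_X) \to \mathcal{M}^2$ with the inner products $\langle \phi, \psi\rangle_{\mathcal{L}^2(M_X)} = \mathbb{E}[\int_0^T \int_{\mathbb{R}^d} \phi\psi\, X(s)(dx)ds]$ and $\langle Y, Z\rangle_{\mathcal{M}^2} = \mathbb{E}[Y(T)Z(T)]$. Then $\nabla_M$ is the adjoint of $I_{M_X}$ precisely when $\langle I_{M_X}(\phi), Y\rangle_{\mathcal{M}^2} = \langle \phi, \nabla_M Y\rangle_{\mathcal{L}^2(M_X)}$ for all $\phi \in \mathcal{L}^2(M_X)$ and $Y \in \mathcal{M}^2$. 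Writing out the left-hand side, $\langle I_{M_X}(\phi), Y\rangle_{\mathcal{M}^2} = \mathbb{E}[(\int_0^T\int_{\mathbb{R}^d}\phi(s,x)M_X(ds,dx))\, Y(T)]$, and using \eqref{martRep} to replace $Y(T)$ by $\int_0^T \int_{\mathbb{R}^d}\nabla_M Y(s,x)M_X(ds,dx)$ (noting $Y(0)\cdot\mathbb{E}[\int\phi\, dM_X]=0$ since the integral is a mean-zero martingale), identity \eqref{eq1} gives $\mathbb{E}[\int_0^T\int_{\mathbb{R}^d}\phi(s,x)\nabla_M Y(s,x)X(s)(dx)ds] = \langle \phi, \nabla_M Y\rangle_{\mathcal{L}^2(M_X)}$, which is exactly the right-hand side. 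This is the alternative form of \eqref{eq3}/\eqref{eq.Ext} already displayed after Proposition \ref{PropNabla}, now valid for all $Y\in\mathcal{M}^2$ by the extension in Theorem \ref{Thm.Ext}.

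I expect no genuine obstacle here, since both statements are essentially repackagings of results already proven; the only point requiring a little care is the bookkeeping around initial values — elements of $\mathcal{M}^2$ have initial value zero, the martingale representation \eqref{martRep} carries an explicit $Y(0)$ term, and the stochastic integral $I_{M_X}(\phi)$ is a mean-zero martingale, so the $Y(0)$ contributions drop out of every pairing. I would also remark that the adjoint relation combined with the isometry is consistent: for a bijective isometry between Hilbert spaces the inverse coincides with the adjoint, and we have already shown in Theorem \ref{Thm.Ext} that $\nabla_M = I_{M_X}^{-1}$, so the Lemma is in a sense a restatement of that identification together with Proposition \ref{PropIso}. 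I would close by noting that the uniqueness assertions are not needed again, as the operators and their domains are already fully pinned down by Theorem \ref{Thm.Ext}.
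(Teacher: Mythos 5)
Your proposal is correct and follows essentially the same route as the paper: both parts reduce to the martingale representation $Y = I_{M_X}(\nabla_M Y)$ together with the isometry identity \eqref{eq1} from Proposition \ref{PropIso}. The extra observations (that $Y(0)=0$ for $Y\in\mathcal{M}^2$, and that the isometry also follows from $\nabla_M = I_{M_X}^{-1}$ with $I_{M_X}$ a bijective isometry) are sound but not needed beyond what the paper already does.
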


\begin{proof}
Let $Y \in \mathcal{M}^2$. Then we get, using the same arguments as in Proposition \ref{PropIso},
\begin{align*}
\| \nabla_M Y \|_{\mathcal{L}^2(M_X)}^2 &= \mathbb{E} \left[  \int_0^T \int_{\mathbb{R}^d} (\nabla_M Y(s,x))^2 X(s) (dx) ds \right] \\
&= \mathbb{E} \left[ \left( \int_0^T \int_{\mathbb{R}^d} \nabla_M Y(s,x) M_X(ds, dx) \right)^2 \right] \\
&= \left\| \int_0^\cdot \int_{\mathbb{R}^d} \nabla_M Y(s,x) M_X(ds, dx) \right\| _{\mathcal{M}^2}^2 \\
&= \| Y \|_{\mathcal{M}^2}^2 .
\end{align*}
Further, let $\phi \in \mathcal{L}^2(M_X)$. As
\begin{align*}
\langle I_{M_X}(\phi) , Y \rangle_{\mathcal{M}^2} &= \mathbb{E} \left[ \int_0^T \int_{\mathbb{R}^d} \phi (s,x) M_X(ds,dx) Y(T) \right] \\
&= \mathbb{E} \left[ \int_0^T \int_{\mathbb{R}^d} \phi(s,x) M_X(ds, dx) \int_0^T \int_{\mathbb{R}^d} \nabla_M Y(s,x) M_X(ds,dx) \right] \\
&= \mathbb{E} \left[  \int_0^T \int_{\mathbb{R}^d} \phi(s,x) \nabla_M Y(s,x) X(s)(dx)ds \right] \\
&= \langle \phi , \nabla_M Y \rangle_{\mathcal{L}^2(M_X)}
\end{align*}
holds, we get that $\nabla_M$ is the adjoint operator of $I_{M_X}$.
\end{proof}

\bibliography{Martingale_Representation_SBM}
\bibliographystyle{apalike}

\end{document}